\documentclass[11pt]{article}

\usepackage{amsmath,amssymb,amsthm}
\usepackage{tikz,hyperref}
\usetikzlibrary{arrows.meta}

\newtheorem{theorem}{Theorem}[section]

\newtheorem{corollary}[theorem]{Corollary}
\newtheorem{lemma}[theorem]{Lemma}
\newtheorem{question}[theorem]{Question}
\newtheorem{problem}[theorem]{Problem}

\theoremstyle{definition}

\newtheorem{example}[theorem]{Example}

\title{Wei duality, Fomin-Greene duality and demimatroids}
\author{Thomas Britz}
\date{}

\begin{document}

\maketitle

\begin{abstract}
The Fomin-Greene Duality Theorem for finite posets and Wei's Duality Theorem for demimatroids 
each relate sets of extremal invariants via seemingly similar dualities.
This paper describes precisely how these two dualities relate. 
It is shown that the chain and antichain numbers each satisfy 
a Wei-type duality with antichain- and chain-deletion numbers, respectively, 
and that, for each poset with at least two elements, 
its chain and antichain demimatroids are not related by any composition of standard demimatroid duality operations.
Their upper Wei-number numbers do however determine each other via Fomin-Greene duality.  
The two dualities thereby coincide for Wei numbers but not for the underlying rank functions.
\end{abstract}



\section{Introduction}
\label{sec:intro}

Wei's Duality Theorem~\cite{BrJoMaSh12,wei91} 
and the Fomin-Greene Duality Theorem~\cite{fomin78,greene76a} for finite posets
are celebrated theorems that in some ways seem strikingly similar, 
both relating two sets of extremal invariants by a dual operation. 
Alberto, Mart\'{i}nez-Bernal and Valencia-Bucio~\cite{AlMaVa26} recently showed that 
the chain and antichain numbers in the
Fomin-Greene Duality Theorem are upper Wei numbers of the antichain and chain demimatroids.
This raises a natural question:
how exactly do the two dualities underlying these theorems relate?

The present paper answers that question.
It shows that the chain and antichain numbers each satisfy a distinct Wei-type duality theorem.
It also shows that, in general, the chain and antichain demimatroids are not related by 
standard demimatroid involutions.
Nevertheless, the two duality theorems meet exactly for certain Wei numbers, 
and the paper describes this connection precisely.

The contents of the paper are as follows.
Section~\ref{sec:definitions} introduces the two theorems and the notation and terminology used to describe them.
Section~\ref{sec:question} presents the connection established in~\cite{AlMaVa26} 
and derives from it two Wei-type duality theorems for the chain and antichain numbers.
A question is posed, of whether the Fomin-Greene relation between these numbers is achievable by 
standard demimatroid dualities, and this question is answered in the negative.
The exact relation between the numbers is then described.
Section~\ref{sec:discussion} discusses a further involution operation between the chain and antichain complexes,
and an open problem is posed, regarding the possibility of a broader common context for the two duality theorems.

\section{The two duality theorems}
\label{sec:definitions}

Let $P = (E,\preceq)$ be a finite poset. 
A  {\em     chain} $C$ in $P$ is a subset of~$E$ for which either  $a\preceq b$  or $b\preceq a$ for all $a,b\in C$.
An {\em antichain} $A$ in $P$ is a subset of~$E$ for which neither $a\preceq b$ nor $b\preceq a$ for all $a,b\in A$.
For $k=0,1,\ldots$, let $c_k$ (resp., $a_k$) denote 
the maximal cardinality of a union of $k$ chains (resp., antichains) in~$P$, 
and let $\lambda_k=c_k-c_{k-1}$ and $\tilde{\lambda}_k=a_k-a_{k-1}$ for all $k\geq 1$.
The Fomin-Greene Duality Theorem for posets was discovered by Greene~\cite{greene76a} in 1976 and, 
independently, by Fomin~\cite{fomin78} in 1978; see also~\cite{BrFo01}.
It provides a duality that relates the chain numbers $c_i$ to the antichain number $a_j$, 
and thereby elegantly and powerfully generalises 
Dilworth's famous Theorem~\cite{dilworth50} as well as the dual of that theorem due to Mirsky~\cite{mirsky71}.


\begin{theorem}[The Fomin-Greene Duality Theorem~\cite{fomin78,greene76a}]
\label{thm:dual}$ $\\
For any finite poset $P$,
the sequences
$\lambda(P) = (\lambda_1, \lambda_2, \ldots)$ and
$\tilde{\lambda}(P) = (\tilde{\lambda}_1, \tilde{\lambda}_2, \ldots)$
are weakly decreasing, and form conjugate partitions of the number~$n=|P|$.
\end{theorem}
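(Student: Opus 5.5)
We would follow the classical network-flow proof, essentially Frank's approach to Greene's theorem, which obtains both weak decrease and conjugacy from one min-cost flow computation. Let $n=|P|$. Build a network $N$ with source $s$, sink $t$, and for each $x\in E$ two nodes $x^-,x^+$. The arcs are:
\begin{itemize}
\item $s\to x^-$ of cost $0$, for each $x$;
\item $x^+\to t$ of cost $0$, for each $x$;
\item $x^-\to x^+$ of cost $-1$ and capacity $1$, for each $x$;
\item a parallel arc $x^-\to x^+$ of cost $0$ and unbounded capacity, for each $x$;
\item $x^+\to y^-$ of cost $0$, for each $x\prec y$.
\end{itemize}
An integral flow of value $k$ decomposes into $k$ $s$--$t$ paths, and each path traces a chain. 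Its minimum cost is therefore $-c_k$, since a union of $k$ chains is the set of elements whose cost-$(-1)$ arcs are used. The standard fact that the minimum cost of a flow of value $k$ is convex in $k$ (successive shortest augmenting paths have nondecreasing cost) gives $c_k-c_{k-1}\ge c_{k+1}-c_k$. Hence $\lambda$ is weakly decreasing.

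For the antichain side and conjugacy, we run the primal--dual (successive shortest path) algorithm and track the node potentials $\pi$ alongside the flows. At each stage the current flow $f$ has value $k$ and is optimal for some parameter $p$. Equivalently, $f$ and $\pi$ satisfy complementary slackness for the Lagrangian problem of maximising $|C_1\cup\dots\cup C_k|-pk$ over families of $k$ chains, i.e. maximising $\sum$(elements covered)$-p\cdot$(number of chains).

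The key step is to show that at parameter $p$ the potentials yield antichains $A_1,\dots,A_p$. We take $A_i=\{x:\pi(x^-)< i\le \pi(x^+)\}$ after normalising $\pi(s)=0$ and $\pi(t)=p$. These are antichains because the arcs $x^+\to y^-$ have nonnegative reduced cost, so $\pi$ is monotone along chains. We then establish the min--max identity
\[ \max_k\,(c_k-pk)\;=\;\min_{\text{families of }p\text{ antichains}}\bigl(n-|A_1\cup\dots\cup A_p|\bigr)\]
together with its mirror image, so that
\[ c_k+a_p\;=\;\min\ \ge\ \ldots \quad\text{and}\quad c_k+a_p\ \ge\ \ldots\ \]
with equality $c_k+a_p=n+kp$ exactly when $(k,p)$ is simultaneously attained. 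The inequality $|C\cap A|\le 1$ gives the general bound $c_k+a_p\le n+kp$ for all $k,p$. In partition language this says $a_p=\min_k(n+kp-c_k)$, where $n+kp-c_k=\sum_{i>k}\lambda_i+kp$. Since $\lambda$ is decreasing, this minimum equals $\sum_i\min(\lambda_i,p)$, which is exactly $\lambda'_1+\dots+\lambda'_p$ for the conjugate partition $\lambda'$. Hence $\tilde\lambda=\lambda'$, and $\tilde\lambda$ is automatically weakly decreasing.

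The main obstacle is the equality part of the min--max identity, namely that for every $p$ some $k$ achieves $c_k+a_p=n+kp$. This is where the potentials must be shown to give $p$ antichains covering every element outside the chain family, and meeting each of the $k$ chains in exactly $p$ elements. We would verify it through complementary slackness. A flow-carrying path has zero reduced cost, so it climbs $\pi$ by exactly $p$ in unit steps, one per element of $A_1\cup\dots\cup A_p$ it uses. An element not on any path must have its cost-$(-1)$ arc saturated-free and therefore positive reduced cost, which forces it into some $A_i$. Because the potentials change by integers and every integer $p$ is attained between consecutive breakpoints of the convex cost function, every $p$ is covered.
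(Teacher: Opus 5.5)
The paper gives no proof of Theorem~\ref{thm:dual}. It quotes the result from Greene and Fomin and uses it only as a black box, to obtain Theorem~\ref{thm:FG-demi-form} and Corollary~\ref{cor:explicit} through Theorem~\ref{thm:upper-wei-posets}. Your min-cost-flow argument is therefore a genuinely different, self-contained route, and its logic is sound:
convexity of the minimum cost $-c_k$ gives weak decrease of $\lambda$; the bound $|C\cap A|\le 1$ gives $c_k+a_p\le n+kp$ for all $k,p$; and integral optimal potentials at parameter $p$ yield antichains $A_1,\ldots,A_p$ whose union has size $n-c_k+kp$. Hence $a_p=\min_k(n+kp-c_k)=\sum_i\min\{\lambda_i,p\}$, which forces $\tilde\lambda=\lambda'$.
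What your route buys is that weak decrease and conjugacy both come out of one LP-duality computation. Moreover, the intermediate formula $a_p=\sum_i\min\{\lambda_i,p\}$ is, via Theorem~\ref{thm:upper-wei-posets}, exactly the first identity of Corollary~\ref{cor:explicit}. What the citation buys is brevity, since the paper needs only the statement.

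Three points need tightening.

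First, the display containing $c_k+a_p=\min\ge\ldots$ is a placeholder. All you need, and all you later use, is $c_k+a_p\le n+kp$ for all $k,p$, with equality for some $k$ at each $p$. No ``mirror image'' is required.

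Second, tracking successive shortest paths only produces potentials with $\pi(t)-\pi(s)$ equal to some $\lambda_k$ (a negated augmenting-path length). So an integer $p$ with $\lambda_{k+1}<p<\lambda_k$ needs a direct argument:
\begin{itemize}
\item With $\lambda_0:=\infty$, pick $k$ with $\lambda_{k+1}\le p\le\lambda_k$.
\item By concavity of $k\mapsto c_k$, an optimal value-$k$ flow maximises $c_{k'}-pk'$. After adding a return arc $t\to s$ of cost $p$, it is therefore an optimal circulation.
\item Shortest-path distances in its residual graph then give integral potentials with nonnegative reduced cost on every residual arc.
\item The residual arcs $t\to s$ and, when $k\ge1$, $s\to t$ force $\pi(t)-\pi(s)=p$; for $k=0$ you may simply raise $\pi(t)$.
\end{itemize}

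Third, a flow-carrying path need not have zero reduced cost on every arc: a saturated cost-$(-1)$ arc at a covered element outside $A_1\cup\cdots\cup A_p$ has reduced cost $-1$. What you actually use is the following:
\begin{itemize}
\item flow-carrying uncapacitated arcs are tight;
\item the parallel cost-$0$ arc gives $\pi(x^+)\ge\pi(x^-)$;
\item a used cost-$(-1)$ arc gives $\pi(x^+)-\pi(x^-)\le1$, and an unused one gives $\pi(x^+)-\pi(x^-)\ge1$;
\item the arcs at $s$ and $t$ give $0\le\pi(x^-)\le\pi(x^+)\le p$.
\end{itemize}
Consequently each uncovered element lies in some $A_i$ with $1\le i\le p$, and each of the $k$ disjoint covered chains meets $A_1\cup\cdots\cup A_p$ in exactly $p$ elements.
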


\begin{example}
\label{exa:Duality_Theorem_for_posets}
In the poset $P$ given by the Hasse diagram below, 
$c_1=4$, $c_2=6$, $a_1=2$, $a_2=4$, $a_3=5$ and $a_4=6$, 
so $\lambda(P) = (4,2)$ and $\tilde{\lambda}(P)=(2,2,1,1)$.
As asserted by Theorem~\ref{thm:dual}, $\tilde{\lambda}(P)=\lambda(P)^T$.
\[
\begin{tikzpicture}[scale=0.5]
  \begin{scope}[shift={(0,0)}]
	\tikzstyle{vertex}=[circle,fill=lightgray,draw=black,inner sep = 0.5mm]
    \foreach \i/\x/\y in {a/0/0, b/-1/1, c/0/1, d/0/2, e/1/2, f/0/3}{\node[vertex] (\i) at (1.5*\x,1.5*\y) {};}
    \draw (a) -- (c) -- (e)  
          (b) -- (d) -- (f) 
          (c) -- (d);
    \draw (0,-1) node {$P$};
  \end{scope}
  \begin{scope}[shift={(5,0)}]
    \draw (0, 3) grid (4,4)
          (0, 2) grid (2,3);
    \draw (2,-1) node {$\lambda(P)$};
  \end{scope}
  \begin{scope}[shift={(12,0)}]
    \draw (1, 0) grid (2,4)
          (2, 2) grid (3,4);
    \draw (2,-1) node {$\tilde{\lambda}(P)$};
  \end{scope}
\end{tikzpicture}
\]
\end{example}

For any $[n,k]$ linear code $C$ over some field~$\mathbb{F}$,
the $i$th (resp.\ $j$th) {\em generalised (Hamming) weight} of $C$ (resp.\ $C^\perp$) is,
for $i = 0,\ldots,k$ and $j = 0,\ldots,n-k$,
\[
  \begin{array}{r@{\;\,}l@{\;}l}
    d_i       & := d_i(C)      &= \min\bigl\{\textrm{wt}(D)\::\: \textrm{$D$ is a linear $[n,i]$ subcode of $C$}\}\,;\\[1mm]
    d^\perp_j & := d_j(C^\perp)&= \min\bigl\{\textrm{wt}(D)\::\: \textrm{$D$ is a linear $[n,j]$ subcode of $C^\perp$}\}\,.
  \end{array}
\]
In 1991, Wei~\cite{wei91} presented a remarkably elegant and useful dual relationship 
between the numbers $d_i$ and $d^\perp_j$, 
as expressed by the following theorem, 
in which $[n]:= \{1,\ldots,n\}$.

\begin{theorem}[Wei's Duality Theorem~\cite{wei91}]
\label{TBPJC:thm:Wei}
For each $[n,k]$ linear code $C$ over a field~$\mathbb{F}$,
set $U_C := \{d_1,\ldots,d_k\}$ and $V_C := \{n+1-d^\perp_{n-k},\ldots,n+1-d^\perp_1\}$.
Then
\[
  U_C\cup V_C = [n] \quad\textrm{and}\quad
  U_C\cap V_C = \emptyset\,.
\]
\end{theorem}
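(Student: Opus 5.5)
The plan is to recast generalised weights in matroid language and then argue by counting. Fix a generator matrix $G$ of $C$ with columns indexed by $[n]$, and let $r$ be the rank function of the column matroid $M$ of $G$, so $r([n])=k$. A subcode $D$ of dimension $i$ with support $S$ corresponds to a subspace of the dual space annihilating the columns outside $S$. Hence the minimal support size of an $i$-dimensional subcode is
\[
  d_i=\min\{|S| : |S|-(r(S)-r([n])+ \text{\dots})\}
\]
which in clean form reads
\[
  d_i=\min\{|X| : k-r([n]\setminus X)\ge i\},
\]
where $X$ is the support and the codewords supported in $X$ form a space of dimension $k-r([n]\setminus X)$. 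The same reasoning applied to $C^\perp$, whose column matroid is the dual matroid $M^*$ with rank $r^*(X)=|X|-k+r([n]\setminus X)$, gives
\[
  d^\perp_j=\min\{|X| : (n-k)-r^*([n]\setminus X)\ge j\}=\min\{|X| : |X|-r(X)\ge j\},
\]
that is, the smallest set of nullity at least $j$.

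Next I would establish the strict monotonicity $d_1<d_2<\cdots<d_k$, and likewise for the $d^\perp_j$. If $X$ realises $d_{i+1}$, then deleting a suitable element of $X$ drops $k-r([n]\setminus X)$ by at most one. This is because adding one element to $[n]\setminus X$ raises $r$ by at most $1$, so a smaller set achieves level $i$. Consequently $U_C$ has exactly $k$ elements and $V_C$ has exactly $n-k$ elements. Both lie in $[n]$, since $d_i\ge 1$ and $d^\perp_j\le n$. It therefore suffices to prove $U_C\cap V_C=\emptyset$, because the sizes then force the union to be $[n]$.

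For disjointness, suppose $d_i = n+1-d^\perp_j$ for some $i,j$. Let $X$ realise $d^\perp_j$, so $|X|=n+1-d_i$ and $|X|-r(X)\ge j$. Put $Y=[n]\setminus X$, so that $|Y|=d_i-1$. By minimality of $d_i$, we get $k-r(X)\le i-1$, that is, $r(X)\ge k-i+1$. I would then obtain a contradiction by tracking how the functions $t\mapsto \max\{k-r([n]\setminus X): |X|=t\}$ and the nullity counterpart jump as $t$ increases, and showing that their jump positions are complementary. Concretely, I would use the monotone chain argument: as $t$ increases by one, exactly one of two quantities increases. The first is the dimension $\delta(t)$ of the best subcode supported on $t$ coordinates. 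The second is the nullity-based quantity evaluated at $n-t$. This uses the identity $k-r([n]\setminus X)=|X|-r^*(X)$, applied to complements.

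The main obstacle is this complementarity step, where two different extremal sets must interact. My first attempt would be the cleaner standard route. Define $\delta(t)=\max_{|X|=t}(k-r([n]\setminus X))$ and $\delta^*(t)=\max_{|X|=t}(|X|-r(X))$. Then $U_C$ is the set of jump points of $\delta$, and $V_C$ is the set of $n+1-s$ where $s$ ranges over the jump points of $\delta^*$. The key is to prove the identity
\[
  \delta^*(n-t)=n-t-k+\delta(t),
\]
which expresses $\delta^*$ through $\delta$, and then to count jumps. Verifying this identity is where I expect subtlety, because the max over $|X|=n-t$ of nullity must match the max over complements. If it fails as stated, I would fall back on Wei's original contradiction argument. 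That argument takes an optimal $X$ for $d_i$ and an optimal $X'$ for $d^\perp_j$ with $X\cup X'=[n]$ or $X\cap X'=\emptyset$, then applies submodularity of $r$ to $X$ and $[n]\setminus X'$ to derive the contradiction $i+j>\dots$.
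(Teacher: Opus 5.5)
Your proposal is correct, and it does more than the paper does for this statement. The paper gives no proof: it quotes the theorem from Wei and recovers it as the special case $D=D_C$ of demimatroid Wei duality (Theorem~\ref{thm:Wei-demi-matroids}), via the identifications $d_i=\overline{\sigma}_i(D_C)$ and $d^\perp_j=\overline{\tau}_j(D_C)$ taken from \cite{BrJoMaSh12}. Your first paragraph is exactly that identification. Note that $\overline{r}^{\,*}(X)=|X|-r(X)=r^\circ(X)$, so your formula for $d^\perp_j$ is $\overline{\tau}_j$; discard the garbled first display, since the ``clean form'' is the right one. What you add is a self-contained proof of the duality itself. It uses only the fact that ranks rise by $0$ or $1$ when an element is added, so it actually proves Theorem~\ref{thm:Wei-demi-matroids} for arbitrary demimatroids. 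The only code-specific input is $\dim C^\perp(X)=|X|-r(X)$. Submodularity is never needed, so your fallback's appeal to it is unnecessary. The paper's route buys brevity by outsourcing the argument; yours supplies the argument and shows why no matroid structure is required.

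The step you flag as subtle is in fact immediate. Put $m(s):=\min\{r(Y):|Y|=s\}$. Substituting $Y=[n]\setminus X$ gives $\delta(t)=k-m(n-t)$, and by definition $\delta^*(s)=s-m(s)$. Hence $\delta^*(n-t)=n-t-k+\delta(t)$ holds tautologically. Since $m(s)-m(s-1)\in\{0,1\}$ (remove an element from a minimiser, or add one to a minimiser), two facts follow. First, $\delta$ jumps at $t$ if and only if $m(n+1-t)=m(n-t)+1$. Second, $\delta^*$ jumps at $n+1-t$ if and only if $m(n+1-t)=m(n-t)$. So $V_C=[n]\setminus U_C$ directly, and even the cardinality count is superfluous.

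Your abandoned direct attempt also closes in one line. For $X$ realising $d^\perp_j$, your inequalities give $k-i+1\le r(X)\le d^\perp_j-j$. Symmetrically, let $X'$ realise $d_i$ and set $Z=[n]\setminus X'$, so $|Z|=d^\perp_j-1$ and $r(Z)\le k-i$. Minimality of $d^\perp_j$ then gives $d^\perp_j-j\le r(Z)\le k-i$, which contradicts the first chain.
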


\begin{example}
\label{exa:Wei-linear-code}
The $[5,3]$ linear code $C$ over $\mathbb{Z}_2$ given the generator matrix 
\[
  \begin{pmatrix}
    1 & 0 & 1 & 0 & 0\\
    0 & 1 & 1 & 0 & 0\\
    0 & 0 & 0 & 1 & 1
  \end{pmatrix}
\]
has generalised weights $d_1 = 2$, $d_2 = 3$ and $d_3 = 5$.
Similarly, the dual code $C^\perp$ has generalised weights $d^\perp_1 = 2$ and $d^\perp_2 = 5$.
Therefore,
\begin{align*}
  U_C &:= \{d_1,d_2,d_3\} = \{2,3,5\}\\
  V_C &:= \{5+1-d^\perp_2,\ldots,5+1-d^\perp_1\} = \{6-5,6-2\} = \{1,4\}
\end{align*}
and so $U_C\cup V_C = [5]$ 
and    $U_C\cap V_C = \emptyset$,
as asserted by Theorem~\ref{TBPJC:thm:Wei}.
\end{example}

Wei's Duality Theorem has been generalised in many ways; see for instance
\cite{ashikhmin98, BrJoMaSh12, BrMaSh20, ducoat2015, GhJo20, HoSh01, martinex-penas19, MaMa18, 
MoFi10, PaPrRa23, ravagnani16,tang24, XuKaHa22}.
The demimatroid generalisations in~\cite{BrJoMaSh12} are particularly important 
since demimatroids, defined below, provide a natural and fundamental combinatorial framework for 
understanding and further generalising Wei's Duality Theorem.


A {\em demimatroid} is a pair $D=(E,r)$ where $E$ is a finite set and $r:2^E\to\mathbb{Z}$ is 
a function satisfying the following two conditions:
\begin{itemize}\setlength{\itemsep}{0mm}
  \item[{(R1)}] $r(\emptyset) = 0$\,;
  \item[{(R2)}] for each $X\subseteq E$ and $x\in E$, \, 
                $r(X)\leq r(X\cup\{x\})\leq r(X)+1$\,.
\end{itemize}
The {\em rank} of $D$ is $r(E)$.

\begin{example}
\label{exa:vector-demimatroid}
Let $C$ be a $[n,k]$ linear code over a field $\mathbb{F}$.
The {\em vector demimatroid} of $C$ is the demimatroid $D_C = (E,r)$
where $E = [n]$ and where $r(A)$ is the dimension of the punctured code $C\setminus (E-A)$, 
i.e., the code obtained by deleting from each codeword in $C$ any entry not indexed by an element of~$A$.
Note that $D_C$ is also the usual {\em vector matroid} of $C$; see~\cite{oxley11}.
\end{example}

\begin{example}
\label{exa:chain-antichain-demimatroid}
Let $P=(E,\preceq)$ be a finite poset
and let $\mathcal{C}:=\mathcal{C}(P)$ and $\mathcal{A}:=\mathcal{A}(P)$ 
be the families of chains and antichains of $P$, respectively.
Define $D_\mathcal{C} = (E,r_\mathcal{C})$ and $D_\mathcal{A} = (E,r_\mathcal{A})$ 
where, for each $X\subseteq E$,
\begin{align*}
  r_\mathcal{C}(X) &= \max\{\,|C| \,:\, C\subseteq X\,,\; C\in\mathcal{C}\}\,;\\
  r_\mathcal{A}(X) &= \max\{\,|A| \,:\, A\subseteq X\,,\; A\in\mathcal{A}\}\,.    
\end{align*}
Then $D_\mathcal{C}$ and $D_\mathcal{A}$ are demimatroids, 
called the {\em chain demimatroid} and {\em antichain demimatroid} of $P$, respectively; 
see~\cite{AlMaVa26}.
Note that $r_\mathcal{C}(E) = c_1(P)$ and $r_\mathcal{A}(E) = a_1(P)$,
and that demi-matroids can be defined as above for simplicial complexes more generally.
\end{example}

Below are three duality operations on demimatroids, 
mapping $D=(E,r)$ as follows:
\[
  \begin{array}{l@{\;\;}l@{\;}lll@{\;}l}
    \text{The {\em dual}       of } D: & D^*          &= (E,r^*)            &\text{where} & r^*(A)          &= |A|+r(E\setminus A)-r(E)\,;\\
    \text{The {\em supplement} of } D: & \overline{D} &= (E,\overline{r}\;) &\text{where} & \overline{r}\,(A) &= r(E)-r(E\setminus A)\,;\\
    \text{The {\em nullity}    of } D: & D^\circ      &= (E,r^\circ)        &\text{where} & r^\circ(A)      &= |A|-r(A)\,.
  \end{array}
\]
As shown in~\cite{BrJoMaSh12,MaVaVi23}, 
these three operations each yield a new demimatroid, and each operation is an involution. 
Also the three operations commute, and any two compose to the third, 
forming, together with the identity, the Klein group of operations on rank functions.

Now define 
\begin{align*}
  \overline{\sigma}_i &= \min\{|X|\;:\; X\subseteq E\,,\;\overline{r}\,(X) = i\}\,;\\
  \overline{\tau}_j   &= \min\{|X|\;:\; X\subseteq E\,,\;\overline{r}^*(X) = j\}\,.
\end{align*} 
In~\cite{BrJoMaSh12}, it was shown that the generalised weights of a linear code~$C$ 
are determined by the vector demimatroid $D_C$; 
in particular, $d_i=\overline{\sigma}_i$ and $d^\perp_j=\overline{\tau}_j$.
Wei's Duality Theorem is therefore naturally generalised with respect to demimatroids as follows.

\begin{theorem}[Wei duality for demimatroids~\cite{BrJoMaSh12}]
\label{thm:Wei-demi-matroids}$ $\\
For each demimatroid $D = (E,r)$ of rank $k = r(E)$ and size $n=|E|$,
define sets
$U_D := \{\overline{\sigma}_1,\ldots,\overline{\sigma}_k\}$ and 
$V_D := \{n+1-\overline{\tau}_{n-k},\ldots,n+1-\overline{\tau}_1\}$.
Then 
\[
  U_D\cup V_D = [n] \quad\textrm{and}\quad
  U_D\cap V_D = \emptyset\,.
\]
\end{theorem}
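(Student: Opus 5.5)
We work throughout with the supplement $\overline{D}=(E,\overline{r})$ and its dual $\overline{D}^*=(E,\overline{r}^*)$, and reduce the statement to a counting argument on a single maximal chain of subsets. By the Klein group relations, $\overline{r}^*=r^\circ$, so
\[
  \overline{r}^*(X)=|X|-r(X), \qquad \overline{r}(X)=r(E)-r(E\setminus X).
\]
Both are demimatroid rank functions, so each is monotone and increases by at most one when an element is added. Hence for each $i\le k$ the value $\overline{\sigma}_i$ is well defined. It is also the minimum size of a set $X$ with $\overline{r}(X)\ge i$: a minimal set with $\overline{r}\ge i$ must have $\overline{r}=i$ exactly, since removing one element drops the rank by at most one. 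Likewise $\overline{\tau}_j=\min\{|X|:|X|-r(X)\ge j\}$ for $j\le n-k$.

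\textbf{Step 1: strict monotonicity.} We show $\overline{\sigma}_1<\cdots<\overline{\sigma}_k$ and $\overline{\tau}_1<\cdots<\overline{\tau}_{n-k}$. Take $X$ of size $\overline{\sigma}_{i+1}$ with $\overline{r}(X)=i+1$, and delete one element; the result has $\overline{r}\ge i$. Hence $\overline{\sigma}_i\le \overline{\sigma}_{i+1}-1$. The same argument applies to $\overline{\tau}$. Consequently $|U_D|=k$ and $|V_D|=n-k$, and all elements lie in $[n]$ since $1\le\overline{\sigma}_i,\overline{\tau}_j\le n$. It therefore suffices to prove $U_D\cap V_D=\emptyset$.

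\textbf{Step 2: disjointness.} Suppose $\overline{\sigma}_i=n+1-\overline{\tau}_j=:s$. Let $X$ have $|X|=s$ and $\overline{r}(X)=i$, and let $Y$ have $|Y|=n+1-s$ and $|Y|-r(Y)=j$. Since $|X|+|Y|=n+1$, the sets meet. The plan is to choose a suitable element $x\in X\cap Y$ and derive a contradiction with minimality. Minimality gives
\[
  \overline{r}(X\setminus\{x\})=i-1, \qquad |Y\setminus\{x\}|-r(Y\setminus\{x\})=j-1 \quad \text{for every } x\in X\cap Y.
\]
The second condition says $r(Y\setminus\{x\})=r(Y)$, so removing $x$ from $Y$ does not lower $r$. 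The first says $r(E\setminus X\cup\{x\})=r(E\setminus X)+1$, so adding $x$ to $E\setminus X$ raises $r$. Note that $E\setminus X\subseteq$ \dots{} is not related to $Y$ by inclusion, and this is where the difficulty lies. Demimatroids have no submodularity, so we cannot compare these two marginal changes directly.

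\textbf{Step 3 (main obstacle): a counting argument on one chain.} Instead of comparing marginals, we count along a single maximal chain of sets. Since $|E\setminus X|=n-s$ and $|Y|=n-s+1$, we build a chain passing near both sets,
\[
  \emptyset=Z_0\subset Z_1\subset\cdots\subset Z_n=E, \qquad |Z_t|=t,
\]
and count how $r$ increases along it. Let $S=\{t: r(Z_t)>r(Z_{t-1})\}$, so that $|S|=k$. For each $t$, minimality relates the position to the two sequences:
\begin{itemize}
  \item $\overline{\sigma}_i\le s$ forces at least $i$ increases within the top $s$ steps;
  \item $\overline{\tau}_j\le n+1-s$ forces at least $j$ non-increases within the bottom $n+1-s$ steps.
\end{itemize}
Combining these with the minimality of both values at the single shared step $s$ should show that the step at position $n+1-s$ is counted twice, which contradicts Step 1.

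The hardest part is choosing the chain: a single chain does not generally realise both minima, since $E\setminus X$ and $Y$ are not nested. The first attempt will be an alternative route. We show directly, by induction on $n$ via deleting one element $e$, that $U_D$ and $V_D$ partition $[n]$. Deleting $e$ gives the restricted demimatroid on $E\setminus\{e\}$, and we track how $\overline{\sigma}$ and $\overline{\tau}$ shift, relying only on (R1) and (R2). If that bookkeeping closes, it replaces the direct disjointness argument of Steps 2 and 3.
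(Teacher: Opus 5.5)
The paper gives no proof of this theorem; it quotes it from the literature. So your proposal has to stand on its own, and it has a genuine gap: disjointness is never proved.

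Your preliminaries and Step~1 are correct. You show that $\overline{r}^*=r^\circ$, that the minima may be taken over sets with $\overline{r}(X)\ge i$ (resp.\ $|X|-r(X)\ge j$), and that both sequences are strictly increasing. Hence $U_D,V_D\subseteq[n]$ with $|U_D|+|V_D|=n$, which correctly reduces the theorem to disjointness. But disjointness is the entire content of the theorem, and it is never established.

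In Step~2 you use minimality only against the subsets $X\setminus\{x\}$ and $Y\setminus\{x\}$. That gives marginal information at the unrelated sets $E\setminus X$ and $Y\setminus\{x\}$, and, as you note yourself, nothing links them without submodularity.

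Step~3 is only a heuristic. Along an arbitrary maximal chain, the number of increases in the top $s$ steps is $\overline{r}(E\setminus Z_{n-s})$. The number of non-increases in the bottom $n+1-s$ steps is $r^\circ(Z_{n+1-s})$. So your two bullet claims hold only for chains passing through sets realising both minima at levels $n-s$ and $n+1-s$, such as $E\setminus X$ and $Y$. You give no way to choose such a chain, and you concede it need not exist. The conclusion is left as ``should show'', and the fallback induction on $n$ is not carried out.

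The missing idea is that minimality of $\overline{\sigma}_i$ and $\overline{\tau}_j$ is a statement about \emph{all} sets of a given cardinality. Because of that, no inclusion between $E\setminus X$ and $Y$ is needed.
\begin{itemize}
\item Since $\overline{\sigma}_i=s$, every $X'$ with $|X'|=s-1$ has $\overline{r}(X')\le i-1$. Equivalently, every set of size $n-s+1$ has $r$-value at least $r(E\setminus X)+1$. In particular, $r(Y)\ge r(E\setminus X)+1$.
\item Since $\overline{\tau}_j=n+1-s$, every $W$ with $|W|=n-s$ has $|W|-r(W)\le j-1$, so $r(W)\ge |Y|-j=r(Y)$. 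In particular, $r(E\setminus X)\ge r(Y)$.
\end{itemize}
These two inequalities contradict each other, which finishes your Step~2 in two lines.

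Equivalently, set $m(t):=\min\{r(Z):|Z|=t\}$, which is nondecreasing with increments in $\{0,1\}$ by (R2). Then $U_D=\{n+1-t : m(t)=m(t-1)+1\}$ and $V_D=\{n+1-t : m(t)=m(t-1)\}$ for $t\in[n]$, and the partition of $[n]$ is immediate.
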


\section{How the two dualities relate}
\label{sec:question}

Following~\cite{AlMaVa26}, 
define the $i$th \emph{upper Wei number} of a demimatroid $D=(E,r)$ by
\[
  \sigma^i(D):=\max\{\,|X|\::\:X\subseteq E,\ r(X)=i\,\}\,.
\]
By definition, these numbers relate to the numbers $\overline{\sigma}_{i}(D)$ as follows, 
where $n=|E|$ and $k=r(E)$.
\begin{lemma}
\label{lem:upper-lower}
For each $i=0,1,\ldots,k$,\, $\sigma^i(D) = n - \overline{\sigma}_{k-i}(D) = n - \overline{\tau}_{k-i}(D^*)$.
\end{lemma}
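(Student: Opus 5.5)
The plan is to unwind the definitions and use the complementation bijection $X\mapsto E\setminus X$ on $2^E$. For the first equality, I first write out the supplement explicitly: $\overline{r}(X)=k-r(E\setminus X)$. For $0\le i\le k$ this gives
\[
  \overline{r}(X)=k-i \iff r(E\setminus X)=i\,.
\]
Setting $Y=E\setminus X$, so that $|X|=n-|Y|$, I then get
\[
  \overline{\sigma}_{k-i}(D)=\min\{\,n-|Y| \::\: Y\subseteq E,\ r(Y)=i\,\}
  = n-\max\{\,|Y|\::\:Y\subseteq E,\ r(Y)=i\,\}=n-\sigma^i(D)\,.
\]
Rearranging gives $\sigma^i(D)=n-\overline{\sigma}_{k-i}(D)$.

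Before this manipulation is valid, both the minimum and the maximum must be over nonempty families. I will check that every value $i\in\{0,\dots,k\}$ is attained by $r$. Take a chain $\emptyset=X_0\subset X_1\subset\cdots\subset X_n=E$ obtained by adding one element at a time. By (R1) and (R2), $r(X_0)=0$, $r(X_n)=k$, and consecutive values of $r$ along the chain differ by $0$ or $1$. Hence every integer between $0$ and $k$ occurs as some $r(X_j)$.

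For the second equality, I will show that $\overline{\tau}_j(D^*)=\overline{\sigma}_j(D)$ for every $j$. By definition, $\overline{\tau}_j(D^*)$ is computed from the rank function obtained from $r^*$ by first taking the supplement and then the dual. I will use the Klein-group relations among the three operations quoted from~\cite{BrJoMaSh12,MaVaVi23}: they commute, they are involutions, and any two compose to the third. From these, the supplement of $r^*$ is the nullity $r^\circ$, and the dual of $r^\circ$ is $\overline{r}$. So the function defining $\overline{\tau}_j(D^*)$ is exactly $\overline{r}$, and the two minima coincide. If one prefers not to rely on the cited group structure, the same fact follows from a short direct computation: substitute the defining formulas and use $r^*(E)=n-k$, after which everything reduces to $k-r(E\setminus X)$.

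The main obstacle is notational rather than mathematical. One must read $\overline{r}^{\,*}$ in the definition of $\overline{\tau}_j$ correctly, as the dual of the supplement, and then apply it to $D^*$ rather than to $D$. Getting the order of these operations wrong would produce a nullity function instead of $\overline{r}$. The direct formula check is the safeguard against this.
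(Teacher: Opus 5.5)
Your proof is correct and takes the same approach as the paper, which treats the lemma as immediate from the definitions: complementation $X\mapsto E\setminus X$ gives the first equality, the identity $(r^*)^\circ=\overline{r}$ gives the second, and your check that $r$ attains every value in $\{0,\ldots,k\}$ supplies the nonemptiness the paper leaves implicit. One small quibble: since the operations commute, $\overline{r}^{\,*}$ is the same function whichever order the supplement and dual are applied, so the only real pitfall is evaluating it on $D$ rather than $D^*$ (which gives $r^\circ$), not the order of the operations.
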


Alberto, Mart\'{i}nez-Bernal and Valencia-Bucio~\cite[Propositions~5.4 and~5.7]{AlMaVa26}
recently showed a direct connection between the numbers occurring in the two duality theorems.

\begin{theorem}
\label{thm:upper-wei-posets}
Let $P$ be a finite poset of height $h=c_1(P)$ and width $w=a_1(P)$.
For all $i=0,1,\ldots,w$ and $j=0,1,\ldots,h$,
\begin{align*}
  c_i(P) &= \sigma^i(D_\mathcal{A}(P))\,;\\
  a_j(P) &= \sigma^j(D_\mathcal{C}(P))\,.
\end{align*}
\end{theorem}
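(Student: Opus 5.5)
We prove both identities with the same two-sided argument. Each one reduces, via the Dilworth/Mirsky-type structure of a poset, to describing the sets of prescribed antichain (resp.\ chain) rank in terms of chain (resp.\ antichain) covers.

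For the first identity, fix $i\in\{0,1,\ldots,w\}$. We show $c_i(P)=\max\{|X| : X\subseteq E,\ r_\mathcal{A}(X)=i\}$ by proving two inequalities.

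\emph{Upper bound.} Let $X\subseteq E$ with $r_\mathcal{A}(X)=i$, so the largest antichain of the induced subposet $P|_X$ has size $i$. By Dilworth's Theorem (the case $k=1$ of Theorem~\ref{thm:dual}, read as $\tilde\lambda_1=$ number of parts of $\lambda$), $X$ is covered by $i$ chains. Hence $|X|\le c_i(P)$.

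\emph{Lower bound.} Take chains $C_1,\ldots,C_i$ whose union $X$ has $|X|=c_i(P)$.
\begin{itemize}
\item Since $X$ is a union of $i$ chains, $r_\mathcal{A}(X)\le i$.
\item We also need $r_\mathcal{A}(X)\ge i$. Suppose instead that $r_\mathcal{A}(X)=i'<i$. Because $i\le w$ and $c_k$ is strictly increasing for $k\le w$ (Theorem~\ref{thm:dual}: $\lambda_k\ge1$ exactly for $k\le$ height, and dually the number of nonzero parts is $w$), the set $X$ cannot be all of $E$. Choose $x\in E\setminus X$. By Dilworth applied to $X$, the set $X$ is a union of $i'$ chains. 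Adding the singleton chain $\{x\}$ gives a union of at most $i'+1\le i$ chains of size $c_i+1$, a contradiction.
\end{itemize}
Hence $r_\mathcal{A}(X)=i$, and the maximum equals $c_i$. (Alternatively, since adding an element raises $r_\mathcal{A}$ by at most $1$ by (R2), one can grow $X$ to exactly rank $i$; but the argument above already gives equality directly.)

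The second identity is proved symmetrically, with the roles of chains and antichains swapped. Fix $j\le h$ and let $X\subseteq E$ with $r_\mathcal{C}(X)=j$. Mirsky's Theorem partitions $X$ into $j$ antichains, giving $|X|\le a_j$. Conversely, a maximum union of $j$ antichains has longest chain at most $j$. If that longest chain were $j'<j$, Mirsky gives a cover by $j'$ antichains. We may then adjoin an outside point as a singleton antichain; such a point exists because $a_{j'+1}>a_{j'}$ for $j'<h$, which is Theorem~\ref{thm:dual} again. This again contradicts maximality.

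The only delicate point is the existence of an element outside $X$ when $i\le w$ (resp.\ $j\le h$). One must check that a union of fewer than $w$ chains cannot be all of $E$; this holds since any chain meets a maximum antichain in at most one element. Similarly, fewer than $h$ antichains cannot cover a maximum chain. This elementary observation settles the issue without the full Fomin-Greene theorem.
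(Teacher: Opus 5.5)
Your proof is correct. The paper does not prove this statement itself; it imports it from \cite[Propositions~5.4 and~5.7]{AlMaVa26}, so your argument supplies a self-contained proof of what the paper takes on citation. The route is elementary and symmetric:
\begin{itemize}
\item The upper bound $\sigma^i(D_\mathcal{A})\le c_i$ is Dilworth's theorem applied to the subposet on $X$.
\item For the lower bound, a maximum union $X$ of $i$ chains has $r_\mathcal{A}(X)\le i$. If instead $r_\mathcal{A}(X)=i'<i$, you can re-cover $X$ by $i'$ chains and add a singleton chain $\{x\}$ with $x\notin X$. This gives at most $i$ chains covering $c_i+1$ elements, a contradiction.
\item The $a_j$ identity is the same argument with Mirsky's theorem in place of Dilworth's.
\end{itemize}
What this buys over the citation is the observation that the identification needs only Dilworth and Mirsky. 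The full Fomin-Greene theorem therefore enters the paper only afterwards, in Theorem~\ref{thm:FG-demi-form} and Corollary~\ref{cor:explicit}.

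There is one slip to clean up in your lower-bound bullet. The parenthetical is wrong as written: $\lambda_k\ge 1$ holds exactly for $k\le w$, since the number of nonzero parts of $\lambda$ is $\tilde\lambda_1=w$, not for $k\le h$. Also, the assertion that $X\neq E$ holds only under the contradiction hypothesis; when $i=w$, Dilworth forces $X=E$. The cleanest justification is simply $r_\mathcal{A}(E)=w\ge i>i'=r_\mathcal{A}(X)$, and dually $r_\mathcal{C}(E)=h\ge j>j'$. This is what your final paragraph essentially says, and it removes any appeal to Theorem~\ref{thm:dual}.
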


That is, 
the chain and antichain numbers in the Fomin-Greene Duality Theorem 
are upper Wei numbers of the antichain and chain demimatroids, respectively.
This provides a precise connection between the two theories.
It also provides two explicit Wei-type duality theorems for the chain and antichain numbers, as follows.

Let $P=(E,\preceq)$ be a finite poset.
Define the {\em     chain-deletion numbers} $\delta^{\mathcal C}_i(P)$ 
   and the {\em antichain-deletion numbers} $\delta^{\mathcal A}_j(P)$ of $P$ respectively as follows:
\begin{align*}
  \delta^{\mathcal C}_i(P) &:=\min\{\,|X|\::\:X\subseteq E\,,\ r_\mathcal{C}^\circ(X) = i\,\}\,;\\[.5mm]
  \delta^{\mathcal A}_j(P) &:=\min\{\,|X|\::\:X\subseteq E\,,\ r_\mathcal{A}^\circ(X) = j\,\}\,.
\end{align*}
The number $r_\mathcal{C}^\circ(X) = |X|-r_\mathcal{C}(X)$
   (resp., $r_\mathcal{A}^\circ(X) = |X|-r_\mathcal{A}(X)$) is 
the minimum number of elements that must be deleted from $X$ to leave a chain (resp., an antichain).
Together with the chain and antichain numbers, 
the deletion numbers $\delta^{\mathcal A}_t(P)$ and $\delta^{\mathcal C}_s(P)$ 
satisfy Wei-type dualities that can be purely expressed in terms of the poset~$P$.

\begin{theorem}
\label{thm:poset-wei-dualities}
Let $P$ be a finite poset of size $n$, height $h$ and width $w$,
and define 
\[
  \begin{array}{l@{\;}l@{\qquad}l@{\;}l}
    U_C &:= \{ c_0(P)+1,\ldots,c_{w-1}(P)+1\} &  V_A &:= \{ \delta_1^\mathcal{A}(P),\ldots,\delta_{n-w}^\mathcal{A}(P)\}\\[0.5mm]
    U_A &:= \{ a_0(P)+1,\ldots,a_{h-1}(P)+1\} &  V_C &:= \{ \delta_1^\mathcal{C}(P),\ldots,\delta_{n-h}^\mathcal{C}(P)\}\,.
  \end{array}
\]  
Then $U_C \cup V_A = U_A \cup V_C = [n]$ and $U_C\cap V_A = U_A\cap V_C = \emptyset$\,.
\end{theorem}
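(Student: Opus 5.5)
Apply Theorem~\ref{thm:Wei-demi-matroids} to a suitable demimatroid in the Klein orbit of $D_\mathcal{A}$ (for $U_C\cup V_A$), and symmetrically of $D_\mathcal{C}$ (for $U_A\cup V_C$). Then translate the resulting sets $U_D,V_D$ into chain numbers via Lemma~\ref{lem:upper-lower} and Theorem~\ref{thm:upper-wei-posets}, and into deletion numbers by unwinding the definitions.

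Take $D := D_\mathcal{A}^\circ = (E, r_\mathcal{A}^\circ)$, whose rank is $k = n-w$. Since the operations form a Klein group, the supplement of $D$ is $\overline{r^\circ_\mathcal{A}} = r_\mathcal{A}^*$, and the dual of the supplement satisfies $\overline{r^\circ_\mathcal{A}}^{\,*} = r_\mathcal{A}$ (dual composed with supplement is nullity, and $\circ\circ=\mathrm{id}$). I will double-check these identities directly from the formulas, e.g.
\[
\overline{r^\circ}(X) = (n-r(E)) - \bigl(|E\setminus X| - r(E\setminus X)\bigr) = |X| + r(E\setminus X) - r(E) = r^*(X).
\]
With this choice, Theorem~\ref{thm:Wei-demi-matroids} gives $U_D = \{\overline{\sigma}_1(D),\ldots,\overline{\sigma}_{n-w}(D)\}$ and $V_D = \{n+1-\overline{\tau}_w(D),\ldots,n+1-\overline{\tau}_1(D)\}$. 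Here $\overline{\sigma}_i(D) = \min\{|X| : r_\mathcal{A}^*(X)=i\}$ and $\overline{\tau}_j(D) = \min\{|X| : r_\mathcal{A}(X) = j\}$.

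The naive identification $\overline{\sigma}_i = \delta^\mathcal{A}_i$ does not hold, so a complementation step is needed. Rather than $D_\mathcal{A}^\circ$, I will choose $D$ so that its supplement rank is exactly $r_\mathcal{A}^\circ$; that is, $D = \overline{D_\mathcal{A}^\circ} = D_\mathcal{A}^*$, whose rank is $n-w$. Then $\overline{r^*} = r^\circ$, so $\overline{\sigma}_i(D) = \delta^\mathcal{A}_i(P)$ for $i=1,\ldots,n-w$, and $U_D = V_A$. Also $\overline{r^*}^{\,*} = (r^\circ)^* = \overline{r}$ (supplement of $r_\mathcal{A}$). Hence $\overline{\tau}_j(D) = \min\{|X| : r_\mathcal{A}(E)-r_\mathcal{A}(E\setminus X) = j\} = n - \max\{|Y| : r_\mathcal{A}(Y) = w-j\} = n - \sigma^{w-j}(D_\mathcal{A})$. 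This step is where Lemma~\ref{lem:upper-lower} enters. It needs care because every value in $0,\ldots,w$ is attained by (R2), so the min and max are over nonempty sets. By Theorem~\ref{thm:upper-wei-posets}, $\overline{\tau}_j(D) = n - c_{w-j}(P)$, so $n+1-\overline{\tau}_j(D) = c_{w-j}(P)+1$. As $j$ ranges over $1,\ldots,w$, this yields exactly $\{c_0+1,\ldots,c_{w-1}+1\} = U_C$. Therefore $V_D = U_C$, and Theorem~\ref{thm:Wei-demi-matroids} gives $U_C\cup V_A=[n]$ and $U_C\cap V_A=\emptyset$.

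The second statement follows identically with $\mathcal{A}$ and $\mathcal{C}$, $w$ and $h$, and the chain and antichain numbers interchanged, using $D=D_\mathcal{C}^*$ and the second identity of Theorem~\ref{thm:upper-wei-posets}.

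The main obstacle is bookkeeping: picking the correct member of the Klein orbit and verifying the identities $\overline{r^*}=r^\circ$ and $(r^\circ)^*=\overline{r}$ from the explicit formulas, together with the index reversal $j\mapsto w-j$. I will verify these identities directly rather than rely on the group structure, and sanity-check the result on Example~\ref{exa:Duality_Theorem_for_posets}.
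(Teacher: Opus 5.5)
Your proof is correct and is essentially the paper's argument. It combines the same ingredients (Theorem~\ref{thm:upper-wei-posets}, Lemma~\ref{lem:upper-lower}, the Klein-group identities and Theorem~\ref{thm:Wei-demi-matroids}). The only difference is that you apply Wei duality to $D=D_\mathcal{A}^*$, so that $U_D=V_A$ and $V_D=U_C$ directly. The paper instead applies it to $D_\mathcal{A}$ itself, via $\delta^{\mathcal A}_j(P)=\overline{\tau}_j(D_\mathcal{A})$ and $c_i(P)+1=n+1-\overline{\sigma}_{w-i}(D_\mathcal{A})$, and leaves the reflection $x\mapsto n+1-x$ of $[n]$ implicit.
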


\begin{example}
\label{exa:theorem-poset-Wei}
For the poset $P$ in Example~\ref{exa:Duality_Theorem_for_posets},
\[
  \begin{array}{l@{\;}l@{\qquad}l@{\;}l}
    U_C &= \{ 1,5\}      &  V_A &= \{ 2,3,4,6\}\\
    U_A &= \{ 1,3,5,6\}  &  V_C &= \{ 2,4\}\,.
  \end{array}
\]  
Then $U_C \cup V_A = U_A \cup V_C = [6]$ and $U_C\cap V_A = U_A\cap V_C = \emptyset$, 
as asserted in Theorem~\ref{thm:poset-wei-dualities}.
\end{example}

\begin{proof}[Proof of Theorem~\ref{thm:poset-wei-dualities}]
Let $D_\mathcal{A} := D_\mathcal{A}(P) = (E,r_\mathcal{A})$
and note that $r_\mathcal{A}^*(E) = n-w$.
By Theorem~\ref{thm:upper-wei-posets} and Lemma~\ref{lem:upper-lower}, 
$c_i(P) + 1 = \sigma^i(D_\mathcal{A}) + 1 = n + 1 - \overline{\sigma}_{w-i}(D_\mathcal{A})$.
Also,
\begin{align*}
     \delta^{\mathcal A}_j(P) 
  &=     \min\{\,|X|\::\:X\subseteq E\,,\ |X| - r_\mathcal{A}(X) = j\,\}\\
  &= n - \max\{\,|X|\::\:X\subseteq E\,,\ r_\mathcal{A}^*(X) = n-w-j\,\}\\
  &= n - \sigma^{n-w-j}(D_\mathcal{A}^*)\,,
\end{align*}
so, by Lemma~\ref{lem:upper-lower}, 
$\delta^{\mathcal A}_j(P) = \overline{\tau}_{j}(D_\mathcal{A})$.
Similarly, 
$a_j(P) + 1 = n + 1 - \overline{\sigma}_{h-j}(D_\mathcal{C})$ and 
$\delta^{\mathcal C}_j(P) = \overline{\tau}_{j}(D_\mathcal{C})$.
The proof therefore follows from Theorem~\ref{thm:Wei-demi-matroids}.
\end{proof}

By Theorem~\ref{thm:poset-wei-dualities}, 
the chain and antichain numbers each separately satisfy a Wei-type duality theorem.
This follows from Theorem~5.2 and Propositions~5.4 and~5.7 of~\cite{AlMaVa26},
although it does not seem to have been stated there explicitly.
However, the chain and antichain numbers are not Wei duals of each other: 
the dualities pairs chain and anti numbers with antichain-deletion and chain-deletion numbers, respectively.
The three relations may therefore be summarised as follows:
\begin{center}
\begin{tikzpicture}[>=Stealth,thick]
  \draw (0,3  ) node {$\big(c_i(P)\big)$};
  \draw (5,3  ) node {$\big(a_j(P)\big)$};
  \draw (0,1  ) node {$\big(\delta^{\mathcal A}_s(P)\big)$};
  \draw (5,1  ) node {$\big(\delta^{\mathcal C}_t(P)\big)$};
  \draw (2.5,3.25) node {\footnotesize Fomin-Greene};
  \draw ( .33,2) node[rotate=90] {\footnotesize Wei};
  \draw (4.67,2) node[rotate=90] {\footnotesize Wei};
  \draw[<->]  (1,3) -- (4,3); 
  \draw[<->] (0,1.4) -- (0,2.6); 
  \draw[<->] (5,1.4) -- (5,2.6); 
\end{tikzpicture}
\end{center}
Since $c_i(P) = \sigma^i(D_\mathcal{A}(P))$ and $a_j(P) = \sigma^j(D_\mathcal{C}(P))$ in 
Theorem~\ref{thm:upper-wei-posets}, 
the natural question is whether the horizontal relation can be explained by 
standard demimatroid operations on $D_\mathcal{C}(P)$ and $D_\mathcal{A}(P)$.

\begin{question}
\label{que:demimatroid-duality}
For a finite poset $P$, can the antichain demimatroid $D_\mathcal{A}(P)$ be obtained from the chain demimatroid
$D_\mathcal{C}(P)$ by a composition of the dual, supplement and nullity operations?
\end{question}

Since the identity, dual, supplement and nullity operations form a group under composition, 
Theorem~\ref{thm:no-standard-duality} below answers Question~\ref{que:demimatroid-duality} in the negative.
The one-element poset is the only nonempty exception: 
its chain and antichain demimatroids coincide.
Dualising the poset $P$ does not change this conclusion, 
since chains and antichains remain unchanged under this operation.
Nor does restricting to a proper ideal or filter change the conclusion
since this would change the ground set $E$ and therefore not yield~$D_\mathcal{A}(P)$.

\begin{theorem}
\label{thm:no-standard-duality}
Let $P = (E,\preceq)$ be a finite poset with at least two elements.
Then
\[
  D_\mathcal{A}(P)\notin
  \big\{D_\mathcal{C}(P),D_\mathcal{C}(P)^*,\overline{D_\mathcal{C}(P)},D_\mathcal{C}(P)^\circ\big\}\,.
\]
\end{theorem}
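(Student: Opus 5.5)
We compare the two rank functions on small sets, namely singletons and two-element sets. Write $r_\mathcal{C}$ and $r_\mathcal{A}$ for the rank functions, and $h$ and $w$ for the height and width.

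First we record the basic values. For every $x\in E$ we have $r_\mathcal{C}(\{x\})=r_\mathcal{A}(\{x\})=1$, since a singleton is both a chain and an antichain. For distinct $x,y$ exactly one of the following holds: $\{x,y\}$ is a chain, in which case $r_\mathcal{C}(\{x,y\})=2$ and $r_\mathcal{A}(\{x,y\})=1$; or $\{x,y\}$ is an antichain, in which case $r_\mathcal{C}(\{x,y\})=1$ and $r_\mathcal{A}(\{x,y\})=2$. Hence $r_\mathcal{C}(\{x,y\})+r_\mathcal{A}(\{x,y\})=3$ for every pair. Since $|E|\ge 2$, such a pair exists, and we treat the four candidates in turn.

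\emph{Identity.} For any pair we have $r_\mathcal{C}(\{x,y\})\ne r_\mathcal{A}(\{x,y\})$, because one value is $1$ and the other is $2$. So $D_\mathcal{A}\ne D_\mathcal{C}$.

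\emph{Nullity.} We have $r_\mathcal{C}^\circ(\{x\})=1-1=0\neq 1=r_\mathcal{A}(\{x\})$. So $D_\mathcal{A}\ne D_\mathcal{C}^\circ$.

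\emph{Supplement.} By definition $\overline{r_\mathcal{C}}(\{x\})=h-r_\mathcal{C}(E\setminus\{x\})$. For this to equal $1=r_\mathcal{A}(\{x\})$ for every $x$, every element $x$ would have to lie in every maximum chain. In that case $E$ itself is a chain, and then $w=1$. But then $r_\mathcal{A}(\{x,y\})=1$ for any pair, while $\overline{r_\mathcal{C}}(\{x,y\})=h-(h-2)=2$. This is a contradiction.

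The main work will be keeping this case clean: we only need to exhibit a single set where the two functions disagree. A simpler route is to compare the total ranks. We have $\overline{r_\mathcal{C}}(E)=h-r_\mathcal{C}(\emptyset)=h$, so equality would require $h=w$. We still need a second comparison, and the singleton argument above supplies it. Alternatively, if $h=w$, pick $x$ outside some maximum chain. Such an $x$ exists unless $E$ is a chain, and if $E$ is a chain then $h=n\ge2>1=w$. For this $x$ we get $\overline{r_\mathcal{C}}(\{x\})=0\ne1=r_\mathcal{A}(\{x\})$.

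\emph{Dual.} We compute $r_\mathcal{C}^*(E)=n-h$. By (R2) applied to $\{x\}$, we have $r_\mathcal{C}^*(\{x\})=1+r_\mathcal{C}(E\setminus\{x\})-h\in\{0,1\}$, and it equals $1$ exactly when $x$ avoids some maximum chain. Equality $r_\mathcal{C}^*=r_\mathcal{A}$ would force $n-h=w$, and would also force every singleton to have rank $1$ under $r_\mathcal{C}^*$. Consider a maximum chain $C$ of size $h$. Any $x\in C$ has $r_\mathcal{C}^*(\{x\})=1$ only if some other maximum chain avoids $x$.

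This singleton test is not conclusive, so we fall back on an easy contradiction: by Dilworth's theorem (the case $k=1$ of Theorem~\ref{thm:dual}), $n\le hw$. We need instead $n=h+w$ together with the pair condition. Take a pair $\{x,y\}\subseteq C$. It is a chain, so $r_\mathcal{A}(\{x,y\})=1$. On the other hand $r_\mathcal{C}^*(\{x,y\})=2+r_\mathcal{C}(E\setminus\{x,y\})-h$, and equality would require $r_\mathcal{C}(E\setminus\{x,y\})=h-1$ for all pairs inside chains.

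If $h\ge2$, take $x,y\in C$ and a maximum chain $C'$ in $E\setminus\{x,y\}$. If $|C'|=h-1$ for every such pair, we look for a contradiction through $r^*(E\setminus C)$: this equals $(n-h)+0-h$, since $E\setminus(E\setminus C)=C$ gives $r_\mathcal{C}(C)=h$, and so $r_\mathcal{C}^*(E\setminus C)=n-2h$. On the other hand, $r_\mathcal{A}(E\setminus C)$ is at least $w-1$, because a maximum antichain meets $C$ in at most one point. With $n-h=w$ we get $n-2h=w-h$, so we need $w-h\ge w-1$, which forces $h\le1$. This contradicts $h\ge2$.

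If $h=1$, then $E$ is an antichain. Here $r_\mathcal{C}^*(\{x\})=1+1-1=1$, which agrees with $r_\mathcal{A}$ on singletons, but $r_\mathcal{C}^*(E)=n-1\ne n=w$.

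This last case, the dual, is the expected obstacle, and the test on the set $E\setminus C$ for a maximum chain $C$ is the key step that resolves it.
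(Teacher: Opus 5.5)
Your identity, nullity and supplement cases are correct and match the paper's. The dual case with $h\ge 2$ has a genuine gap, caused by an arithmetic slip. By definition,
\[
r_\mathcal{C}^*(E\setminus C)=|E\setminus C|+r_\mathcal{C}(C)-r_\mathcal{C}(E)=(n-h)+h-h=n-h,
\]
not $(n-h)+0-h$. You note yourself that $r_\mathcal{C}(C)=h$ and then insert $0$. Your formula $n-2h$ would even be negative for a chain, and no rank can be negative. With the correct value, the assumption $n-h=w$ makes $r_\mathcal{C}^*(E\setminus C)=r_\mathcal{A}(E\setminus C)$ say only that $E\setminus C$ is an antichain of size $w$. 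That is fully compatible with your bound $r_\mathcal{A}(E\setminus C)\ge w-1$, so no contradiction follows. For example, take the N-shaped poset $c_1\prec c_2$, $a_1\prec c_2$, $c_1\prec a_2$. Here $h=w=n-h=2$, and for $C=\{c_1,c_2\}$ both sides equal $2$. Two other steps also lead nowhere. The condition $r_\mathcal{C}(E\setminus\{x,y\})=h-1$ for $x,y\in C$ is never actually used. The Dilworth bound $n\le hw$ does not conflict with $n=h+w$.

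The missing work begins exactly where your computation should have landed: $E=C\sqcup A$, where $A=E\setminus C$ is a maximum antichain. The paper gets this from $r_\mathcal{C}(A)=r_\mathcal{C}^*(C)=r_\mathcal{A}(C)=1$. It then argues as follows:
\begin{itemize}
\item The singleton condition $r_\mathcal{C}(E\setminus\{c\})=h$ gives, for each $c\in C$, a maximum chain $(C\setminus\{c\})\cup\{f(c)\}$ with $f(c)\in A$.
\item $f$ is injective, so $h\le w$, and the symmetric argument gives $w\le h$.
\item Counting incomparabilities then forces $h=w\le 2$, and the cases $h=1$ and $h=2$ are checked by hand.
\end{itemize}
A shorter finish, in the spirit of your pair test, uses a mixed pair rather than a pair inside $C$. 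Since $A$ is a maximum antichain, each $c\in C$ is comparable to some $a\in A$. Then $r_\mathcal{A}(\{c,a\})=1$ would force $r_\mathcal{C}(E\setminus\{c,a\})=h-1$. But $f(c)$ is incomparable to $c$, since otherwise $C\cup\{f(c)\}$ would be a chain of size $h+1$. Hence $f(c)\neq a$, and the maximum chain $(C\setminus\{c\})\cup\{f(c)\}$ lies in $E\setminus\{c,a\}$, a contradiction. Pairs inside $C$ cannot do this job. Once the structure above is known, $r_\mathcal{C}(E\setminus\{x,y\})=h-1$ holds automatically for $x,y\in C$.
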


\begin{proof}
Write $D_\mathcal{C}(P) = (E,r_\mathcal{C})$ and
      $D_\mathcal{A}(P) = (E,r_\mathcal{A})$, 
and let $n=|E|$, $h = r_\mathcal{C}(E)$ and $w = r_\mathcal{A}(E)$
be the size, height and width of $P$, respectively.

Let $x$ and $y$ be distinct elements of $P$. 
Then either $\{x,y\}$ is a      chain, in which case $r_\mathcal{C}(\{x,y\})=2$ and $r_\mathcal{A}(\{x,y\})=1$, 
         or $\{x,y\}$ is an antichain, in which case $r_\mathcal{C}(\{x,y\})=1$ and $r_\mathcal{A}(\{x,y\})=2$.
In either case, $r_\mathcal{C}(\{x,y\})\neq r_\mathcal{A}(\{x,y\})$, 
so $D_\mathcal{A}(P)\neq D_\mathcal{C}(P)$.
Also,
$       r_\mathcal{C}^\circ(\{x\})
  = 1 - r_\mathcal{C}(\{x\}) 
  = 0 
 \neq 1 
  = r_\mathcal{A}(\{x\})
$,
so $D_\mathcal{A}(P)\neq D_\mathcal{C}(P)^\circ$.

Assume that $D_\mathcal{A}(P)=\overline{D_\mathcal{C}(P)}$.
Then $w = r_\mathcal{A}(E) = \overline{r_\mathcal{C}}(E) = r_\mathcal{C}(E)-r_\mathcal{C}(\emptyset) = h$
and, for each $x\in E$,
$
  1 = r_\mathcal{A}(\{x\})
    = \overline{r_\mathcal{C}}(\{x\})
    = r_\mathcal{C}(E)-r_\mathcal{C}(E\setminus\{x\})
    = h-r_\mathcal{C}(E\setminus\{x\})
$,
so every chain of cardinality $h$ contains~$x$.
Therefore, every element of $E$ belongs to every chain of cardinality $h$, so $P$ is a chain.
Then $h=n$ and $w=1$, so $1=w=h=n\geq 2$, a contradiction.
Hence, $D_\mathcal{A}(P)\neq\overline{D_\mathcal{C}(P)}$.

Finally, assume that $D_\mathcal{A}(P)=D_\mathcal{C}(P)^*$.
Then $w = r_\mathcal{A}(E) = r_\mathcal{C}^*(E) = n - h$.
Let $C$ be a chain of size $h$ and set $A := E\setminus C$.
Then 
$          r_\mathcal{C}(A)
   = r_\mathcal{C}(A) + |C| - h
   = r_\mathcal{C}^*(C)
   = r_\mathcal{A}(C)
   = 1
$,
so $A$ is an antichain.
Furthermore, $|A|=n-h=w$, so $A$ is a maximal antichain.

For each $c\in C$, 
$r_\mathcal{C}(E\setminus\{c\}) = r_\mathcal{C}(E) - |\{c\}| + r_\mathcal{A}(\{c\}) = h$, 
so $E\setminus\{c\}$ contains a chain $C_c$ of cardinality $h$.
Since $A$ is an antichain, 
$C_c = (C\setminus\{c\})\cup\{c'\}$
for some $c'\in A$.
Let the function $f:C\to A$ be given by $f(c) = c'$, 
and suppose that $f(c)=f(d)$ for distinct $c,d\in C$.
Then the chains $C_c$ and $C_d$ each contain $f(c)$, 
so $C\cup\{f(c)\}$ is a chain of cardinality $h+1$, a contradiction.
Therefore, $f$ is injective, and so $h \leq w$.

Since $D_\mathcal{C}(P)=D_\mathcal{A}(P)^*$, 
swapping chains and antichains in the argument above yields an injection $g:A\to C$, 
and so $w\leq h$.
Thus, $h=w$, and $f$ and $g$ are bijections.

For each $c\in C$, the element $f(c)$ is comparable with every element $d\in C\setminus\{c\}$; 
that is, either $f(c)\preceq d$ or $d\preceq f(c)$; 
and $f(c)$ is incomparable with~$c$.
Since $f$ is bijective, 
each $c\in C$ is incomparable with exactly one element of $A$. 
On the other hand,
$(A\setminus\{a\})\cup\{g(a)\}$
is an antichain for each $a\in A$, 
so $g(a)$ is incomparable with all $w-1$ elements of $A\setminus\{a\}$. 
Thus, $w-1\leq 1$ and so $h=w\leq2$.
If $h=1$, then the two elements of $E=C\cup A$ are incomparable, contradicting $w=1$.
If $h=2$, then write $C=\{c_1,c_2\}$ and $f(c_i)=a_i$.
Then $\{c_1,a_1\}$ and $\{c_2,a_2\}$ are antichains,
so $r_\mathcal{A}(\{c_1,a_1\})=2$,
whereas
$r_\mathcal{C}^*(\{c_1,a_1\}) = 2+r_\mathcal{C}(\{c_2,a_2\})-2=1$,
a contradiction.
Hence, $D_\mathcal{A}(P)\neq D_\mathcal{C}(P)^*$.
\end{proof}

Theorem~\ref{thm:poset-wei-dualities} shows that the chain and antichain numbers each satisfy a Wei-type duality, 
and 
Theorem~\ref{thm:no-standard-duality} shows that their Fomin-Greene relation is not induced by 
any standard demimatroid duality operation.
It remains to determine the precise relation between the two types of upper Wei numbers.

Theorem~\ref{thm:upper-wei-posets} and Lemma~\ref{lem:upper-lower} allow 
the Fomin-Greene Duality Theorem to be expressed purely in terms of demimatroid Wei numbers.

\begin{theorem}
\label{thm:FG-demi-form}
Let $P=(E,\preceq)$ be a finite poset of height $h$ and width $w$.
Then 
\begin{align*}
  &\big(\overline{\sigma}_w(D_\mathcal{A})-\overline{\sigma}_{w-1}(D_\mathcal{A}), \ldots,
        \overline{\sigma}_1(D_\mathcal{A})-\overline{\sigma}_0(D_\mathcal{A})\big)\,,\\
  &\big(\overline{\sigma}_h(D_\mathcal{C})-\overline{\sigma}_{h-1}(D_\mathcal{C}), \ldots,
        \overline{\sigma}_1(D_\mathcal{C})-\overline{\sigma}_0(D_\mathcal{C})\big)
\end{align*}
are conjugate partitions of $n=|P|$, where
$D_\mathcal{C}=D_\mathcal{C}(P)$ and
$D_\mathcal{A}=D_\mathcal{A}(P)$.
\end{theorem}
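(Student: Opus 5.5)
The plan is to translate the statement directly into the Fomin-Greene Duality Theorem (Theorem~\ref{thm:dual}) using Theorem~\ref{thm:upper-wei-posets} and Lemma~\ref{lem:upper-lower}. For the antichain demimatroid, which has rank $w$, Lemma~\ref{lem:upper-lower} with $k=w$ gives $\sigma^i(D_\mathcal{A}) = n-\overline{\sigma}_{w-i}(D_\mathcal{A})$ for $i=0,\ldots,w$. Theorem~\ref{thm:upper-wei-posets} identifies the left-hand side with $c_i(P)$, so
\[
  \overline{\sigma}_{w-i}(D_\mathcal{A}) = n - c_i(P)\qquad(i=0,1,\ldots,w)\,.
\]
Taking differences for $i=1,\ldots,w$ gives
\[
  \overline{\sigma}_{w-i+1}(D_\mathcal{A})-\overline{\sigma}_{w-i}(D_\mathcal{A}) = c_i(P)-c_{i-1}(P)=\lambda_i\,.
\]
Hence the first displayed sequence in the statement, read left to right, is exactly $(\lambda_1,\ldots,\lambda_w)$. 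The same argument with $D_\mathcal{C}$, which has rank $h$, and $a_j(P)=\sigma^j(D_\mathcal{C})$ shows that the second sequence is $(\tilde\lambda_1,\ldots,\tilde\lambda_h)$.

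It remains to check that these truncated sequences carry all the information in $\lambda(P)$ and $\tilde\lambda(P)$. First, the union of $w$ chains can cover $E$ by Dilworth's theorem, so $c_w=n$. This also follows from the identity above at $i=w$, since $\overline{\sigma}_0=0$ because the only set of supplement-rank $0$ needing to be minimal is $\emptyset$. Consequently $\lambda_i=0$ for $i>w$, and similarly $\tilde\lambda_j=0$ for $j>h$. Second, each $\lambda_i$ with $i\le w$ is positive. This holds because the conjugate of $\tilde\lambda$ has exactly $\tilde\lambda_1=a_1=w$ nonzero parts, by Theorem~\ref{thm:dual}. So the two displayed sequences are precisely the partitions $\lambda(P)$ and $\tilde\lambda(P)$ with trailing zeros removed. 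Theorem~\ref{thm:dual} then says they are conjugate partitions of $n$. Their sums telescope to $n-\overline{\sigma}_0=n$, which gives a consistency check.

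The main obstacle is only bookkeeping: keeping the indices straight under the reversal $i\mapsto w-i$, and confirming that $\overline{\sigma}_0=0$. For the latter, $\overline{r}(\emptyset)=r(E)-r(E)=0$, so the empty set attains the minimum. No genuinely new combinatorics is needed, since all the content lies in Theorems~\ref{thm:dual} and~\ref{thm:upper-wei-posets}.
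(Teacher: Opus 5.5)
Your proof is correct and follows the route the paper indicates. Like the paper, you combine Lemma~\ref{lem:upper-lower} with Theorem~\ref{thm:upper-wei-posets} to identify the two reversed difference sequences with $(\lambda_1,\ldots,\lambda_w)$ and $(\tilde\lambda_1,\ldots,\tilde\lambda_h)$, and then invoke Theorem~\ref{thm:dual}. Your extra checks ($\overline{\sigma}_0=0$, $c_w=n$, and exactly $w$ and $h$ nonzero parts respectively) fill in bookkeeping that the paper leaves implicit.
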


Thus, each of the two sets of upper Wei numbers determines the other.
Theorem~\ref{thm:dual} and Theorem~\ref{thm:upper-wei-posets} give the following explicit identities.
\begin{corollary}\label{cor:explicit}
For each $i=0,1,\ldots,h$ and $j=0,1,\ldots,w$, 
\begin{align*}
     \sigma^i(D_\mathcal{C})
  &= \sum_{j=1}^{w} \min\big\{i,\sigma^j(D_\mathcal{A})-\sigma^{j-1}(D_\mathcal{A})\big\}\,,\\
     \sigma^j(D_\mathcal{A})
  &= \sum_{i=1}^{h} \min\big\{j,\sigma^i(D_\mathcal{C})-\sigma^{i-1}(D_\mathcal{C})\big\}\,.
\end{align*}
\end{corollary}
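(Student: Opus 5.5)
Substituting $c_j(P)=\sigma^j(D_\mathcal{A})$ and $a_i(P)=\sigma^i(D_\mathcal{C})$ from Theorem~\ref{thm:upper-wei-posets} turns the corollary into the purely poset-theoretic identities
\[
  a_i(P)=\sum_{j=1}^{w}\min\{i,\lambda_j\}\,,\qquad
  c_j(P)=\sum_{i=1}^{h}\min\{j,\tilde{\lambda}_i\}\,,
\]
where $\lambda_j=c_j-c_{j-1}$ and $\tilde\lambda_i=a_i-a_{i-1}$. I will prove these from Theorem~\ref{thm:dual}. First I check the index ranges. Since $P$ is a union of $w$ chains, $c_w=n$, so $\lambda_j=0$ for $j>w$. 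Similarly $\tilde\lambda_i=0$ for $i>h$. Also $\lambda_1=c_1=h$ and $\tilde\lambda_1=a_1=w$. Hence $\lambda$ has at most $w$ nonzero parts, each at most $h$, and $\tilde\lambda$ has at most $h$ nonzero parts, each at most $w$. So the sums over $j=1,\dots,w$ and $i=1,\dots,h$ capture all nonzero parts, and the prescribed ranges of $i$ and $j$ are the natural ones.

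The key step is a standard fact about conjugate partitions. If $\mu$ is a partition and $\mu^T$ is its conjugate, then for every $i\ge 0$,
\[
  \sum_{t=1}^{i}\mu^T_t=\sum_{j\ge1}\min\{i,\mu_j\}\,.
\]
To see this, read the Young diagram of $\mu$ by rows and by columns. Row $j$ has $\mu_j$ cells, and $\min\{i,\mu_j\}$ of them lie in the first $i$ columns. Column $t$ has $\mu^T_t$ cells. Summing over rows and over columns counts the same cells: those of the diagram lying in the first $i$ columns.

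Apply this with $\mu=\lambda(P)$. By Theorem~\ref{thm:dual}, $\lambda(P)$ is weakly decreasing, so it is a genuine partition, and $\mu^T=\tilde\lambda(P)$. Then
\[
  a_i=\sum_{t=1}^{i}\tilde\lambda_t=\sum_{j=1}^{w}\min\{i,\lambda_j\}\,,
\]
and substituting the upper Wei numbers gives the first identity. The second identity follows symmetrically with $\mu=\tilde\lambda(P)$, since $(\lambda^T)^T=\lambda$. The only point needing care is this bookkeeping: that the weakly decreasing property holds, and that the truncations at $w$ and $h$ lose no nonzero terms. Once conjugacy is available there is no real obstacle.
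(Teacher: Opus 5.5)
Your proof is correct and takes essentially the paper's approach: the paper obtains the corollary directly from Theorem~\ref{thm:dual} combined with Theorem~\ref{thm:upper-wei-posets}. Your Young-diagram identity $\sum_{t\le i}\mu^T_t=\sum_{j}\min\{i,\mu_j\}$, together with your check that truncating the sums at $w$ and $h$ drops only zero parts, spells out the step the paper leaves implicit.
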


\begin{example}
\label{exa:positive-connection}
For the poset $P$ in Example~\ref{exa:Duality_Theorem_for_posets},
\begin{align*}
    (\sigma^0(D_\mathcal{A}),\sigma^1(D_\mathcal{A}),\sigma^2(D_\mathcal{A}))
  = (c_0,c_1,c_2)
  &=(0,4,6)\\\quad\hbox{and}\quad
    (\sigma^0(D_\mathcal{C}),\ldots,\sigma^4(D_\mathcal{C})) 
  = (a_0,a_1,a_2,a_3,a_4)
  &=(0,2,4,5,6)\,,
\end{align*}
By Lemma~\ref{lem:upper-lower},
\begin{align*}
 (\overline{\sigma}_0(D_\mathcal{A}),
  \overline{\sigma}_1(D_\mathcal{A}),
  \overline{\sigma}_2(D_\mathcal{A}))
 &= (0,2,6)\\\text{and}\qquad\qquad
 (\overline{\sigma}_0(D_\mathcal{C}),\ldots,
  \overline{\sigma}_4(D_\mathcal{C}))
 &= (0,1,2,4,6)\,.
\end{align*}
The reversed difference sequences are therefore
\[
  (6-2,2-0)=(4,2)
  \quad\hbox{and}\quad
  (6-4,4-2,2-1,1-0)=(2,2,1,1),
\]
which are the conjugate partitions in Example~\ref{exa:Duality_Theorem_for_posets}.
\end{example}

Theorems~\ref{thm:poset-wei-dualities}, \ref{thm:no-standard-duality}
and~\ref{thm:FG-demi-form} provide the answer.
The chain and antichain numbers each satisfy a Wei-type duality,
paired respectively with the antichain- and chain-deletion numbers,
while the chain and antichain numbers determine each other by Fomin-Greene duality.
The nature of these dualities are distinct: 
Wei duality relates aspects of a demimatroid to those of its dual, 
whereas Fomin-Greene duality relates $D_\mathcal{C}(P)$ and $D_\mathcal{A}(P)$, 
which are not standard demimatroid duals.
The two theories meet exactly at these upper Wei-numbers,
not at the level of the underlying rank functions.

\section{A further involution and an open problem}
\label{sec:discussion}

There is also a precise relation for simplicial complexes.
For a simplicial complex $\Delta$ on~$E$, define its \emph{anticomplex}~\cite{AlMaVa26} by
\[
  \Delta^\perp
  :=\{\,\tau\subseteq E\::\: |\tau\cap\sigma|\leq 1 \; \text{for every}\; \sigma\in\Delta\,\}\,.
\]
Then $\mathcal{C}(P)^\perp=\mathcal{A}(P)$ and $\mathcal{A}(P)^\perp=\mathcal{C}(P)$, 
since a set meets every chain in at most one element exactly when it is an antichain.  
Anticomplex orthogonality is an involution that swaps the two complexes, 
but it is not one of the standard demimatroid operations 
and it is not an involution on arbitrary simplicial complexes: 
in general, only $\Delta\subseteq\Delta^{\perp\perp}$~\cite{AlMaVa26}.

There is then no immediate relation between the two dualities, 
and it appears that the Fomin-Greene Duality Theorem, or Theorem~\ref{thm:FG-demi-form}, 
cannot be generalised to hold for a demimatroid and some natural dual thereof.
However, a broader demimatroid generalisation might be possible, 
requiring paired rank data or a two-dimensional invariant as in 
the Fomin-Greene Ferrers diagram.
The pair $\big(D_\mathcal{C}(P),D_\mathcal{A}(P)\big)$ does not provide this: 
its horizontal relation is Fomin-Greene conjugation, 
whereas Wei duality sends each member to a different deletion-number.

\begin{problem}
\label{prob:paired-ranks}
Find a natural class of demimatroids pairs $(D_1,D_2)$ for which 
the Wei-number gap sequences are conjugate partitions, 
and determine whether this class admits an involution having 
both Fomin-Greene duality and Wei duality as special cases.
\end{problem}

\bibliographystyle{plain}

\noindent
{\sc School of Mathematics and Statistics, UNSW Sydney, Sydney, NSW 2052, Australia}\\
{\em Email address: \url{britz@unsw.edu.au}}

\end{document}